\definecolor{tocolor}{rgb}{.1,.1,.1}
\definecolor{urlcolor}{rgb}{.2,.2,.6}
\definecolor{linkcolor}{rgb}{.1,.1,.5}
\definecolor{citecolor}{rgb}{.4,.2,.1}
\newcommandx{\thdef}[2]{
	\newaliascnt{#1}{theorem}  
	\newtheorem{#1}[#1]{#2}
	\aliascntresetthe{#1}  
	\newtheorem*{#1*}{#2}
	\expandafter\newcommand\expandafter{\csname #1autorefname\endcsname}{#2}
}
\newtheorem*{rep@theorem}{\rep@title}
\newcommand{\newreptheorem}[2]{%
\newenvironment{rep#1}[1]{%
 \def\rep@title{#2 \ref{##1}}%
 \begin{rep@theorem}}%
 {\end{rep@theorem}}}
\newtheorem{theorem}{Theorem}[section]
\theoremstyle{definition}
\theoremstyle{remark}
\theoremstyle{remark}
\newcommand{\spc}[1]{\mathsf{#1}} 
\newcommand{\shf}[1]{\mathcal{#1}} 
\newcommand{\RR}{\mathbb{R}}
\newcommand{\CC}{\mathbb{C}}
\newcommand{\ZZ}{\mathbb{Z}}
\newcommand{\rbrac}[1]{\left(#1\right)} 
\newcommandx{\fn}[2][2=]{#1\ifthenelse{\equal{#2}{}}{}{\!\rbrac{{#2}}}} 
\newcommandx{\id}[2][2=]{\fn{{\rm id}_{#1}}[#2]} 
\newcommand{\ext}[2][\bullet]{\spc{\Lambda}^{#1}{#2}} 
\newcommandx{\End}[2][1=]{\fn{\spc{End}_{#1}}[#2]} 
\newcommandx{\Hom}[2][1=]{\fn{\spc{Hom}_{#1}}[#2]} 
\newcommandx{\Aut}[2][1=]{\fn{\spc{Aut}_{#1}}[#2]} 
\newcommandx{\image}[1]{\fn{\spc{img}}[#1]} 
\renewcommandx{\ker}[1]{\fn{\spc{ker}}[#1]} 
\newcommandx{\rank}[1]{\fn{\mathrm{rank}}[#1]} 
\newcommandx{\ann}[1]{\fn{\spc{ann}}[#1]} 
\newcommandx{\hlgy}[3][1=\bullet,3=]{\spc{H}_{#1}^{#3}\!\rbrac{{#2}}} 
\newcommandx{\cohlgy}[3][1=\bullet,3=]{\spc{H}^{#1}_{#3}\!\rbrac{{#2}}} 
\newcommandx{\chow}[3][1=\bullet,3=]{\spc{A}^{#1}_{#3}\!\rbrac{{#2}}} 
\newcommandx{\Ext}[3][1=\bullet,3=]{\fn{\spc{Ext}^{#1}_{#3}}[{#2}]} 
\newcommandx{\Tor}[3][1=\bullet,3=]{\fn{\spc{Tor}^{#1}_{#3}}[{#2}]} 
\newcommandx{\Pic}[1]{\fn{\spc{Pic}}[{#1}]} 
\newcommandx{\chernalg}[2][1=\bullet]{\fn{\spc{Chern}^{#1}}[{#2}]} 
\newcommandx{\chern}[2][1=]{\fn{c_{#1}}[#2]} 
\newcommandx{\ch}[2][1=]{\fn{\mathrm{ch}_{#1}}[{#2}]} 
\newcommandx{\sKer}[2][1=]{ \fn{ \shf{K}er_{#1}}[{#2}] } 
\newcommandx{\sHom}[2][1=]{ \fn{ \shf{H}om_{#1}}[{#2}] } 
\newcommandx{\sEnd}[2][1=]{ \fn{ \shf{E}nd_{#1}}[{#2}] } 
\newcommandx{\sExt}[3][1=\bullet,3=]{\fn{\shf{E}xt^{#1}_{#3}}[{#2}]} 
\newcommandx{\sTor}[3][1=\bullet,3=]{\fn{\shf{T}or^{#1}_{#3}}[{#2}]} 
\newcommandx{\forms}[2][1=\bullet]{\Omega^{#1}_{#2}} 
\newcommandx{\can}[1][1=]{\omega_{#1}} 
\newcommandx{\acan}[1][1=]{\omega_{#1}^{-1}} 
\newcommandx{\tshf}[1]{\shf{T}_{#1}} 
\newcommandx{\mvect}[2][1=\bullet]{ \ext[#1]{\tshf{#2}} }
\newcommandx{\der}[2][1=\bullet]{\mathscr{X}^{#1}_{#2}} 
\newcommandx{\sJet}[3][1=,2=]{\shf{J}^{#1}_{#2}#3} 
\newcommandx{\tb}[2][1=]{\spc{T}_{\!#1}{#2}} 
\newcommandx{\ctb}[2][1=]{\spc{T}_{\!#1}^*{#2}} 
\newcommandx{\lie}[2][2=]{\fn{\mathscr{L}_{#1}}[#2]} 
\newcommandx{\hook}[2][2=]{\fn{i_{#1}}[#2]} 
\newcommand{\del}{\partial}
\newcommand{\thickbar}{\mathpalette\@thickbar}
\newcommand{\@thickbar}[2]{{#1\mkern1.5mu\vbox{
  \sbox\z@{$#1\mkern-1mu#2\mkern-1mu$}%
  \sbox\tw@{$#1\overline{#2}$}%
  \dimen@=\dimexpr\ht\tw@-\ht\z@-.6\p@\relax
  \hrule\@height.4\p@ 
  \vskip1\p@
  \hrule\@height.4\p@ 
  \vskip\dimen@
  \box\z@}\mkern1.5mu}
}
\numberwithin{equation}{section}
\newtheoremstyle{parag}
  {\topsep}   
  {\topsep}   
  {}  
  {}       
  {\bfseries} 
  {.}         
  { } 
  {}          
\theoremstyle{parag}
\def\@cite#1#2{{\normalfont[{#1\if@tempswa , #2\fi}]}}
\newcommand{\J}{\mathbb J}
\newcommand{\delbar}{\overline\partial}
\newcommand{\tensor}{\otimes}
\newcommand{\TM}{\mathbb{T}M}
\newcommand{\T}{\mathbb{T}}
\newcommand{\U}{\mathcal{U}}
\newcommand{\gcm}{generalized complex manifold}
\newcommand{\gcs}{generalized complex structure}
\newcommand{\into}{\to}
\newcommand{\wrt}{with respect to}
\newcommand{\ol}{\overline}
\newcommand{\mc}[1]{\text{$\mathcal{#1}$}}
\newcommand{\C}{\mathbb{C}}
\renewcommand{\Im}{\mathrm{Im}}
\newcommand{\gcss}{generalized complex structures}
\newcommand{\gcy}{generalized Calabi--Yau}
\newcommand{\gcys}{generalized Calabi--Yau structure}
\newcommand{\gcyss}{generalized Calabi--Yau structures}
\newcommand{\gf}{\text{$\varphi$}}
\newcommand{\R}{\text{${\mathbb R}$}}
\renewcommand{\iff}{if and only if}
\begin{document}

\title{\vspace{-4em} \huge Type one generalized Calabi--Yaus}
\date{}

\author{
Michael Bailey\thanks{Utrecht University; {\tt m.a.bailey@uu.nl}}
\and
Gil R. Cavalcanti\thanks{Utrecht University; {\tt g.r.cavalcanti@uu.nl}}
\and
Marco Gualtieri \thanks{University of Toronto; {\tt mgualt@math.toronto.edu}}
}
\maketitle

\renewcommand{\abstractname}{\vspace{-\baselineskip}}

\abstract{We study type one generalized complex and generalized Calabi--Yau manifolds. We introduce a  cohomology class that obstructs the existence of a globally defined, closed 2-form which agrees with the symplectic form on the leaves of the generalized complex structure, the {\it twisting class}.  We prove that in a compact, type one,  $4n$-dimensional generalized complex manifold the Euler characteristic must be even and equal to the signature modulo four. The generalized Calabi--Yau condition places much stronger constrains: a compact type one generalized Calabi--Yau fibers over the 2-torus and if the structure has one compact leaf, then this fibration can be chosen to be the fibration by the symplectic leaves of the \gcs. If the twisting class vanishes, one can always deform the structure so that it has a compact leaf. Finally we prove that every symplectic fibration over the 2-torus admits a type one \gcys.
}

\renewcommand\contentsname{\vspace{-\baselineskip}}

\section*{Introduction}

Generalized complex structures, introduced by Hitchin in 2003 \cite{MR2013140} and Gualtieri \cite{MR2811595}, are a simultaneous generalization of complex and symplectic structures. At each point $p$ in the manifold $M$, a \gcs\ is equivalent to a symplectic subspace of $T_pM$ together with a transverse complex structure. The \emph{type} of the \gcs\ at the point in question is the complex dimension of this transverse complex space.  Thus, symplectic structures have type 0 everywhere and complex structures have maximal type everywhere. In general, \gcss\ determine singular symplectic foliations with a transverse complex structure and, if the type is constant, they determine ordinary symplectic foliations.

The type of a generalized complex structure on $M$ is an integer-valued lower semi-continuous function with locally constant parity.  
In the generic even case, \gcss\ may be viewed as symplectic structures with singularities along loci where the type jumps from $0$ to $2$ or higher; when these singular loci are required to satisfy a transversality condition, we have {\it stable \gcss}, which were studied by Cavalcanti and Gualtieri in \cite{Cavalcanti:2015uua}. If, instead, we are in the case of odd type, a generic generalized complex structure would be of type 1 almost everywhere; very little is currently known about type 1 structures.

Besides being the odd analogue of symplectic structures, type 1 structures are closely related to stable (even) \gcss\ in a more direct way: the singular locus of the symplectic form in the stable case inherits a type 1 \emph{\gcys}.  So, the study of stable structures naturally leads to the study of type 1 \gcyss.  The objective of this paper is to determine the basic differential topological properties of these structures.

We prove that a compact, connected, type 1 \gcy\ manifold, $M$, has a rather restricted topology: $M$ must be a fiber bundle over the 2-torus. Further, if the manifold has at least one compact symplectic leaf, then all leaves are compact symplectic manifolds, and $M$ fibers over its symplectic leaf space, which is $T^2$. We also prove a partial converse to this statement: every compact symplectic fibration over $T^2$ admits a \gcys\ for which the symplectic leaves are the fibers of the fibration.  As a special case, we obtain a correspondence in four dimensions: a compact four-manifold admits a type 1 \gcys\ \iff\ it is an oriented fibration over $T^2$. These results may be viewed as the generalized complex analogues of the results obtained by Guillemin, Miranda and Pires for codimension one Poisson structures \cite{MR2861781}.

\section{Topology of type one generalized complex structures}\label{sec:gcss}

\subsection{The twisting class of a generalized complex structure }

\begin{definition}
A  {\it \gcs} on a manifold $M$ with closed 3-form $H$ is a complex structure $\J$ on $\TM = TM \oplus T^*M$  compatible with the natural pairing of vectors on forms and integrable \wrt\ the Courant bracket twisted by  $H$.
\end{definition}

Alternatively, $\J$ is fully determined by its $+i$-eigenspace $L \subset \T_\C M$, a maximal isotropic, involutive  sub-bundle satisfying $L \cap \overline{L} =\{0\}$. Furthermore, $\J$ can be fully described using differential forms:

\begin{definition}
A {\it \gcs} on a manifold with closed 3-form $(M^{2n},H)$, is a complex line bundle  $K \subset \wedge^{\bullet}T^*_\C M$ such that
\begin{enumerate}
\item $K$ is generated pointwise by a form $\rho$ of the following algebraic type
$$\rho = e^{B+ i \omega} \wedge \Omega,$$
where $\Omega$ is a decomposable form and $B$ and $\omega$ are real two-forms;
\item Pointwise, for the generator above,
$$\Omega \wedge \bar{\Omega} \wedge \omega^{n-k}\neq 0,$$
where $k$ is the degree of $\Omega$;
\item For every nonvanishing local section $\rho$ there is $X+\xi \in \Gamma(\overline{L})$ such that
$$d^H\rho := d\rho + H\wedge \rho= \iota_X \rho + \xi \wedge \rho.$$
\end{enumerate}
The degree of the form $\Omega$ at a point $p$ is the {\it type} of the \gcs\ at $p$, the line bundle $K$ is the {\it canonical bundle} and $X+\xi$ is the {\it modular field} corresponding to the trivialization $\rho$.
\end{definition}

\begin{definition}
A {\it generalized Calabi--Yau structure} on $(M,H)$ is a \gcs\ determined by a nowhere vanishing $d^H$-closed form.
\end{definition}

Examples of generalized complex manifolds include symplectic manifolds, $(M,\omega)$, where $K$ is the line generated by $e^{i\omega}$; complex manifolds, where $K = \wedge^{n,0}T^*M$ is the usual canonical bundle; and holomorphic Poisson manifolds $(M,I,\pi)$ where $K = e^{\pi}\cdot \wedge^{n,0}T^*M$ and $\pi$ acts on forms by interior product. 

From the definition we see that, pointwise, the subspace $\mc{D}$ annihilating $\Omega\wedge\bar{\Omega}$ is the complexification of a real subspace of $TM$ and $\omega$ is a symplectic structure on $\mc{D}$. If the type is constant, $\mc{D}$ is an integrable distribution and $\omega$ is a symplectic structure on the fibers. In general, $\mc{D}$ is an integrable singular distribution and $\omega$ gives a symplectic structure to its leaves.

Any \gcs\ $\J$ on $M^{2n}$ decomposes the space of forms into its $ik$-eigenspaces: $U^k$. These are nontrivial for all the integers $k$ between $-n$ and $n$ with $U^n= K$ and $U^{n-k} = \wedge^k \overline{L} \cdot K$. Further the operator $d^H$ also decomposes as a sum $d^H = \del + \delbar$ with
$$\del:\U^k \into \U^{k+1}\quad\mbox{and}\qquad \delbar:\U^k\into \U^{k-1}.$$

Since $L$ is involutive, it is a Lie algebroid over $M$ and using the nondegenerate pairing to identify $\bar{L} = L^*$, $\Gamma(\wedge^\bullet\overline{L})$ becomes a differential graded Lie algebra (DGLA) with $d_L$, the Lie algebroid differential from $L$, and the Schouten--Nijenhuis extension of the Courant bracket as a bracket. Further the space of forms with the operator $\delbar$ is a differential module for this DGLA, i.e., for all $\rho \in \Omega^{\bullet}(M;\C)$ and $\alpha \in \Gamma(\wedge^{\bullet}\overline{L})$ we have 
\begin{equation}\label{eq:differential module}
\{\delbar,\alpha\} \rho = (d_L\alpha) \rho,
\end{equation}
where $\{\cdot,\cdot\}$ is the graded commutator of linear differential operators on forms.

It follows directly from \eqref{eq:differential module} that any (local) modular field is $d_L$-closed. If the canonical bundle is trivial, a global nowhere vanishing section $\rho \in \Gamma(K)$ gives rise to a global modular field $v$. Another nonvanishing section is just a multiple of  $\rho$, say  $g \rho$, where $g:M\into \C^*$, and hence the modular field of this new section is  given by $ v + d_L \log g$ and, if $K$ is trivial, the cohomology class of the  modular field in the complex
$$\xymatrix{
0\ar[r] & \Gamma(M;\C^*) \ar[r]^{d_L \circ \log} & \Gamma(M;\overline{L}) \ar[r]^{d_L}& \Gamma(M;\wedge^2\overline{L}) \ar[r]^{d_L}& \cdots
}$$
is well defined, independent of choice of trivialization and is called the {\it modular class} of the \gcs. If this class is trivial, one can produce a nowhere vanishing $d^H$-closed section of $K$ and the \gcs\ is in fact \gcy.

For \gcss\ of constant type, there is yet another cohomology class determined by the structure that we describe next. Since the type is constant, locally the \gcs\ is determined by a $d^H$-closed form  $e^{B+i\omega}\wedge \Omega$ \cite{MR2811595}. Notice that the form $\Omega$ is not unique: only the line it determines is intrinsic. Similarly, $B+i\omega$ is not unique as it can be changed by any 2-form which wedges zero with $\Omega$ and will still determine the same spinor. So to better understand these quantities we introduce two differential complexes.

Firtly, we let $\mc{I}^\bullet$ be the algebraic ideal generated (pointwise) by the 1-forms which make up $\Omega$. This ideal is independent of the trivialization chosen and is in fact a differential subalgebra of $\Omega^\bullet(M;\CC)$. We can therefore form the quotient which is again a differential complex, so we have the following short exact sequence of differential complexes
\begin{equation}\label{eq:short sequence}
0\into \mc{I}^l \into \Omega^l(M) \into \Omega^{l}(M)/\mc{I}^l \into 0.
\end{equation}
It is worth noticing that this sequence can also be made into a (periodic) sequence of differential complexes with differential $d^H$:
\begin{equation}\label{eq:short sequence H}
\xymatrix@R=4ex@C=3ex{
&\ar[d]^-{d^H}&\ar[d]^-{d^H}&\ar[d]^-{d^H}&\\
0\ar[r] &\mc{I}^{ev} \ar[r]\ar[d]^-{d^H}& \Omega^{ev}(M) \ar[r]\ar[d]^-{d^H} &\Omega^{ev}(M)/\mc{I}^{ev} \ar[r]\ar[d]^-{d^H}& 0\\
0\ar[r] &\mc{I}^{od} \ar[r]\ar[d]^-{d^H}& \Omega^{od}(M) \ar[r]\ar[d]^-{d^H} &\Omega^{od}(M)/\mc{I}^{od} \ar[r]\ar[d]^-{d^H}& 0\\
&&&&
}\end{equation}

From the previous discussion we see that, while $B+ i\omega$ is not well defined as a form, it does give rise to a well defined element $[B+i\omega] \in  \Omega^{2}(M)/\mc{I}^2$ and the particular choice of  forms $B+i\omega$ making  up the spinor  is nothing but an arbitrary choice of pre-image of the intrinsic element in $\Omega^{2}(M)/\mc{I}^2$. Further, the generalized Calabi--Yau condition, $d^H (e^{B+i\omega}\wedge \Omega) =0$, implies that $[e^{B+i\omega}] \in \Omega^{ev}(M)/\mc{I}^{ev}$ is closed for the corresponding $d^H$ operator and $[H + d(B+i\omega)] \in \Omega^{3}(M)/\mc{I}^{3}$ is closed for the corresponding $d$ operator. 

\begin{proposition}
The following are equivalent
\begin{itemize}
\item[a)] The element $[e^{B+i\omega}] \in \Omega^{ev}(M;\C)/\mc{I}^{ev}$ can be represented by a $d^H$-closed form
\item[b)] $\delta [e^{B+i\omega}] =0$, where $\delta$ is the connecting homomorphism in cohomology for the sequence \eqref{eq:short sequence H}
\item[c)]  $H + d(B+ i\omega)$ represents the trivial cohomology class in $\mc{I}^3$.
\end{itemize}  
\end{proposition}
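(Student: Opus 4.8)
The overall plan is: prove (a) $\Leftrightarrow$ (b) as the defining property of a connecting homomorphism, and prove (b) $\Leftrightarrow$ (c) by computing $\delta[e^{B+i\omega}]$ explicitly and then identifying the twisted cohomology of the ideal complex it lands in with that complex's ordinary de Rham cohomology. Write $\sigma = B+i\omega$ (fixing a global $2$-form representing the intrinsic class in $\Omega^2(M;\C)/\mc{I}^2$) and $\mu = H + d\sigma$, a closed $3$-form; as recorded just above the proposition, $[e^\sigma]$ is a $d^H$-cocycle in $\Omega^{ev}(M;\C)/\mc{I}^{ev}$, equivalently --- since $d^H e^\sigma = \mu\wedge e^\sigma$ --- we have $\mu\in\mc{I}^3$, so $[\mu]$ is a genuine class in $H^3_d(\mc{I}^\bullet)$ and (c) is meaningful. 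For (a) $\Leftrightarrow$ (b) I would simply feed the short exact sequence of complexes \eqref{eq:short sequence H} into the long exact sequence in $d^H$-cohomology, whose relevant segment is
$$H^{ev}_{d^H}\big(\Omega^\bullet(M)\big)\;\longrightarrow\;H^{ev}_{d^H}\big(\Omega^\bullet(M)/\mc{I}^\bullet\big)\;\stackrel{\delta}{\longrightarrow}\;H^{od}_{d^H}\big(\mc{I}^\bullet\big):$$
statement (a) asserts precisely that $[e^\sigma]$ lies in the image of the first map, which by exactness holds if and only if $\delta[e^\sigma]=0$.

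For (b) $\Leftrightarrow$ (c) I would first unwind $\delta$. Since $e^\sigma\in\Omega^{ev}(M)$ is an honest lift of the quotient cocycle, $\delta[e^\sigma]$ is represented by $d^H e^\sigma = \mu\wedge e^\sigma\in\mc{I}^{od}$. I would then use the conjugation identity
$$e^{-\sigma}\wedge d^H\big(e^\sigma\wedge\alpha\big)\;=\;d\alpha + \mu\wedge\alpha\;=:\;d^\mu\alpha,$$
valid for every form $\alpha$ (a one-line check from $d e^\sigma = d\sigma\wedge e^\sigma$ and the parities of $H$, $d\sigma$, $e^\sigma$). Because $\mc{I}^\bullet$ is a differential ideal and $e^{\pm\sigma}$ are units of $\Omega^\bullet(M;\C)$, wedging by $e^\sigma$ restricts to an isomorphism of $\ZZ/2$-graded complexes $(\mc{I}^\bullet,d^\mu)\xrightarrow{\;\sim\;}(\mc{I}^\bullet,d^H)$, and $(d^\mu)^2=0$ since $\mu$ is closed of odd degree. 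Under this isomorphism $\delta[e^\sigma]=[\,e^\sigma\wedge\mu\,]$ corresponds to $[\mu]\in H^{od}_{d^\mu}(\mc{I}^\bullet)$; so (b) is equivalent to $\mu$ being $d^\mu$-exact within $\mc{I}^\bullet$ by an even-degree element.

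Finally I would check that $\mu$ is $d^\mu$-exact in $\mc{I}^\bullet$ if and only if it is $d$-exact there, i.e.\ represents $0$ in $H^3_d(\mc{I}^\bullet)$, which is (c). If $\mu=d\beta$ with $\beta\in\mc{I}^2$, then $\gamma := 1 - e^{-\beta}$ lies in $\mc{I}^{ev}$ (its form-degree-zero part cancels and everything else sits in the ideal) and, using $d e^{-\beta} = -d\beta\wedge e^{-\beta} = -\mu\wedge e^{-\beta}$, one gets $d^\mu\gamma = \mu$; hence (b). Conversely, if $\mu = d^\mu\gamma$ with $\gamma = \gamma_2 + \gamma_4 + \cdots\in\mc{I}^{ev}$ (no degree-zero term, as $\mc{I}^0=0$), then comparing the form-degree-$3$ components of $\mu = d\gamma + \mu\wedge\gamma$ gives $\mu = d\gamma_2$ with $\gamma_2\in\mc{I}^2$, which is (c). I do not foresee a serious obstacle; the step that needs the most care is this last reduction --- computing $\delta$, recognising that conjugating by $e^{B+i\omega}$ trades $d^H$ on $\mc{I}^\bullet$ for the twisted differential $d^\mu$, and promoting a $d$-primitive $\beta\in\mc{I}^2$ of $\mu$ to a $d^\mu$-primitive inside $\mc{I}^{ev}$ via $\gamma = 1-e^{-\beta}$. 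The only genuinely non-formal inputs are that conjugation identity and the already-recorded fact that constant type puts $\mu=H+d(B+i\omega)$ in $\mc{I}^3$; the remainder is the standard long exact sequence plus one form-degree comparison, with the main bookkeeping being the $\ZZ/2$-grading and keeping every primitive inside $\mc{I}^\bullet$ rather than in $\Omega^\bullet(M)$.
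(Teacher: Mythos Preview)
Your proof is correct. The overall architecture matches the paper's: (a) $\Leftrightarrow$ (b) via the long exact sequence attached to \eqref{eq:short sequence H}, and the link with (c) via a degree-$3$ comparison on one side and an exponential construction on the other. The paper, however, closes the cycle more economically: instead of proving (c) $\Rightarrow$ (b) through your conjugation isomorphism $(\mc{I}^\bullet,d^\mu)\cong(\mc{I}^\bullet,d^H)$ and the primitive $\gamma=1-e^{-\beta}$, it proves (c) $\Rightarrow$ (a) directly by observing that if $H+d(B+i\omega)=-d\beta$ with $\beta\in\mc{I}^2$, then $e^{B+i\omega+\beta}$ is a $d^H$-closed form representing $[e^{B+i\omega}]$ in $\Omega^{ev}/\mc{I}^{ev}$. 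Your conjugation identity $e^{-\sigma}\,d^H(e^\sigma\alpha)=d^\mu\alpha$ is a clean way to package the same computation and would be useful if one wanted to compare the full $d^H$- and $d^\mu$-cohomologies of $\mc{I}^\bullet$; for the present statement it is an extra layer, since after unwinding your $\gamma$ one recovers exactly the paper's representative (up to the sign of $\beta$). Likewise your (b) $\Rightarrow$ (c) and the paper's coincide once you strip away the conjugation: both amount to reading off the degree-$3$ component of $d^H e^{B+i\omega}=d^H\beta$ and using $\mc{I}^0=0$.
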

\begin{proof}~
\begin{itemize}
\item [a) $\Rightarrow$ b)] By construction of the connecting homomorphism, if $[e^{B+i\omega}]$ is represented be a form $\alpha$, then $d^H\alpha \in \mc{I}^{od}$ is a closed element which represents the  class $\delta[e^{B+i\omega}]$, therefore if $[e^{B+i\omega}]$ can be represented by a  $d^H$-closed form, then $\delta[e^{B+i\omega}] =0$.
\item [b) $\Rightarrow$ c)] If  $\delta [e^{B+i\omega}] =0$, then there is $\beta \in \mc{I}^{ev}$ such that
\begin{equation}\label{eq:b=>c}
d^He^{B+i\omega} = d^H\beta.
\end{equation}
Since $\mc{I}^0 = \{0\}$, the degree 3 part of \eqref{eq:b=>c} is
$$H+ d(B+i\omega) = d\beta_2,$$
where $\beta_2 \in \mc{I}^2$ is the degree 2 component of $\beta$. In particular we get that $H+ d(B+i\omega) $ represents the trivial cohomology class in $H_{\mc{I}}^2$. 
\item [c) $\Rightarrow$ a)] If $H +d(B+i\omega)$ represents an exact class in $H_{\mc{I}}^3$, then there exists $\beta \in \mc{I}^2$ such that $H +d(B+i\omega) + d\beta=0$. Notice that $B+ i\omega+ \beta$ represents the same class as $B +i\omega$ in $\Omega^{2}(M)/\mc{I}^{2}$, hence $[e^{B+i\omega}]$ is represented by $e^{B+i\omega+\beta}$ and for this form we have
$$d^He^{B+i\omega+\beta} = (H + d(B+i\omega+ \beta))e^{B+i\omega+ \beta} = 0.$$
\end{itemize}
\end{proof}

\begin{definition}
The class $[H + d(B+i\omega)] \in H^3_{\mc{I}}$ is the {\it twisting class} of the generalized complex structure.
\end{definition}
This class measures the failure of existence of a global $d^H$-closed form $e^{B+i\omega}$ for which the \gcs\ is locally given by closed form $e^{B+i\omega}\Omega$. Notice that if $H$ is nontrivial in de Rham cohomology the twisting class is automatically nontrivial.

Summarising, we have the following obstructions associated to a \gcs:

\begin{proposition}~
\begin{enumerate}
\item The canonical bundle, $K$, of a \gcm\ has a nowhere vanishing global section \iff\ $c_1(K) =0$;
\item A \gcs\ with $c_1(K) =0$ is \gcy\ \iff\ the modular class vanishes;
\item A \gcs\ of constant type is determined locally by a closed form $e^{B+i\omega} \wedge \Omega$ with $e^{B+i\omega}$ globally defined and satisfying  $d^H e^{B+i\omega} =0$ \iff\ the twisting class vanishes.
\end{enumerate}
\end{proposition}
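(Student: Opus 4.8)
The plan is to read all three items off the discussion preceding the proposition; none needs a new idea, so I would arrange the write-up around the one point that requires genuine care, the local-to-global bookkeeping in (3). Item (1) has no generalized-complex content: $K$ is a smooth complex line bundle, and such a bundle admits a nowhere vanishing global section precisely when it is smoothly trivial, which by the classification of complex line bundles happens precisely when $c_1(K)=0$; I would simply cite this.

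For (2), assume $c_1(K)=0$ and use (1) to fix a nowhere vanishing $\rho\in\Gamma(K)$. By \eqref{eq:differential module} its modular field $v$ is $d_L$-closed, and replacing $\rho$ by $g\rho$ with $g\colon M\to\C^*$ replaces $v$ by $v+d_L\log g$, so the class $[v]$ in the complex $\Gamma(M;\C^*)\xrightarrow{d_L\circ\log}\Gamma(M;\overline L)\xrightarrow{d_L}\Gamma(M;\wedge^2\overline L)$ — the modular class — is well defined. If $[v]=0$, write $v=d_L\log g$; then $g^{-1}\rho$ has modular field $v-d_L\log g=0$, i.e.\ $d^H(g^{-1}\rho)=0$, so the structure is \gcy. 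Conversely, a \gcy\ structure is by definition cut out by a nowhere vanishing $d^H$-closed section of $K$, whose modular field is $0$, so $[v]=0$; this implication also forces $c_1(K)=0$, so the statement is not vacuous.

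For (3), I would start from $d^He^{B+i\omega}=(H+d(B+i\omega))\wedge e^{B+i\omega}$, which, since $e^{B+i\omega}$ is invertible with inverse $e^{-(B+i\omega)}$, shows that $d^He^{B+i\omega}=0$ is equivalent to $H+d(B+i\omega)=0$ as a global form. If a globally defined $B+i\omega$ with the two stated properties exists, then it represents the intrinsic class in $\Omega^2(M)/\mc{I}^2$ and $H+d(B+i\omega)$ is the zero form, so the twisting class vanishes. Conversely, suppose the twisting class vanishes; fix any global representative $B_0+i\omega_0$, so that $H+d(B_0+i\omega_0)\in\mc{I}^3$ is $d$-closed and, the class being zero, equals $-d\beta$ for some $\beta\in\mc{I}^2$ (this is the implication (c) $\Rightarrow$ (a) of the Proposition above). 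Set $B+i\omega:=B_0+i\omega_0+\beta$, again a global representative; then $H+d(B+i\omega)=0$, so $d^He^{B+i\omega}=0$, and since $e^{\beta}\wedge\Omega=\Omega$ for every local decomposable factor $\Omega$, the forms $e^{B+i\omega}\wedge\Omega$ coincide, neighbourhood by neighbourhood, with the original local $d^H$-closed generators of $K$. Hence the structure is locally given by the closed forms $e^{B+i\omega}\wedge\Omega$ with $B+i\omega$ globally defined.

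The step I expect to demand the most care is this last one in (3): verifying that altering the globally defined $2$-form $B+i\omega$ by an element of the global ideal $\mc{I}^2$ preserves the property that $e^{B+i\omega}\wedge\Omega$ is a $d^H$-closed local generator of $K$. This rests only on the two facts already used above — that $\mc{I}$ is a differential ideal and that a form of positive degree in $\mc{I}$ annihilates any local decomposable $\Omega$ whose $1$-form factors generate $\mc{I}$ — so it is routine rather than deep. Items (1) and (2) present no obstacle.
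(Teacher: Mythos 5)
Your proposal is correct and follows essentially the same route as the paper, which states this proposition as a summary of the preceding discussion: (1) is the classification of complex line bundles, (2) is the definition of the modular class together with the observation that killing the modular field of a trivialization $g^{-1}\rho$ is exactly the condition $d^H(g^{-1}\rho)=0$, and (3) is the implication (c) $\Rightarrow$ (a) of the preceding proposition, with the same correction $B+i\omega \mapsto B+i\omega+\beta$, $\beta\in\mc{I}^2$, and the same observation that $e^{\beta}\wedge\Omega=\Omega$ leaves the local generators of $K$ unchanged. No gaps.
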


\subsection{Topological constraints}

Having a type 1 generalized complex structure already places constrains on the topology of the manifold. The most basic one is related to the underlying linear algebra of a type 1 structure (i.e. integrability is not necessary). Indeed an almost generalized  complex structure of type 1 induces an oriented distribution of co-dimension 2 which can always be complemented to an oriented rank 2 distribution. The existence of such distributions in itself already places the first topological restriction.

\begin{theorem}[Atiyah \cite{MR0263102}]
If a compact oriented manifold $M^{4n}$ admits an oriented 2-plane field then the Euler characteristic of $M$ is even and congruent to the signature of $M$ modulo four.
\end{theorem}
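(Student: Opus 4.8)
The plan is to split the statement into three pieces: a linear-algebra reduction coming from the $2$-plane field, the evenness of $\chi$ proved on a codimension-two submanifold, and the mod-four refinement extracted from Hirzebruch's signature theorem.

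First I would record that an oriented $2$-plane field is the same as an orthogonal splitting $TM = E\oplus F$ with $E$ an oriented rank-two (hence complex line) bundle and $F$ oriented of rank $4n-2$; put $c = c_1(E) = e(E)\in H^2(M;\ZZ)$, so that $e(TM) = c\cup e(F)$ and $p(TM) = (1+c^2)\,p(F)$, and therefore both $\chi(M)$ and (via Hirzebruch) $\sigma(M)$ are determined by $c$ and the characteristic classes of $F$. For the evenness: by Poincar\'e duality $\chi(M) = \langle c\cup e(F),[M]\rangle = \langle e(F)|_N,[N]\rangle$, where $N\subset M$ is the transverse zero locus of a section of $E$ — a closed oriented codimension-two submanifold with normal bundle $E|_N$ and $[N] = \mathrm{PD}(c)$. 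Since $TM|_N\cong TN\oplus E|_N\cong F|_N\oplus E|_N$ we get $[TN] = [F|_N]$ in $KO(N)$, hence $e(F|_N)\equiv e(TN)\pmod 2$ because top Stiefel--Whitney classes are stable; thus $\chi(M)\equiv\chi(N)\pmod 2$. But $N$ is a closed oriented manifold of dimension $4n-2\equiv 2\pmod 4$, so its middle-dimensional cup-product pairing is skew-symmetric, $b_{2n-1}(N)$ is even, and $\chi(N)$ — hence $\chi(M)$ — is even.

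The mod-four congruence is the substantive point, because $\chi\equiv\sigma\pmod 2$ already holds for every closed oriented $4n$-manifold (both being $\equiv b_{2n}\pmod 2$). The plan is to combine two ingredients. The first is Hirzebruch's theorem together with the splitting, $\sigma(M) = \langle \tfrac{c}{\tanh c}\,L(F),[M]\rangle$: since $c/\tanh c = 1 + \tfrac13 c^2 - \cdots$ involves only even powers of $c$ with constant term $1$, this reads $\sigma(M) = \langle L_n(F),[M]\rangle + \langle c\cup\theta,[M]\rangle$ for a suitable class $\theta$ of degree $4n-2$, the second summand again descending to $N$. The second is the classical congruence $\sigma(M)\equiv\langle\mathfrak{P}(v_{2n}(M)),[M]\rangle\pmod 4$, where $v_{2n}(M)$ is the middle Wu class and $\mathfrak{P}$ is the Pontryagin square. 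One then has to show $\langle e(TM),[M]\rangle = \langle c\cup e(F),[M]\rangle\equiv\langle\mathfrak{P}(v_{2n}(M)),[M]\rangle\pmod 4$. I expect this matching to be the main obstacle: without the $2$-plane field, $\langle e(TM),[M]\rangle$ is pinned down by the Wu data only modulo two, whereas the splitting hands us the genuine integral lift $c = c_1(E)$ of $w_2(E)$ together with an integral lift $e(F)$ of $w_{4n-2}(F)$, and the mod-four bookkeeping relating $c\cup e(F)$ to the Pontryagin square of $v_{2n}(M)$ — computed from $w(TM) = (1+\bar c)\,w(F)$ — is precisely where the content of Atiyah's theorem resides. (In dimension four one can shortcut this last step: the $2$-plane field makes $M$ almost complex, so the Todd genus $\tfrac14(\chi+\sigma)$ is an integer by the index theorem; combined with the evenness of $\sigma$, which follows from that of $\chi$, this already yields $\chi\equiv\sigma\pmod 4$.)
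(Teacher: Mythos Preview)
The paper does not prove this theorem: it is stated with attribution to Atiyah and a citation, and then immediately applied to rule out $\CC P^{2n}$. There is no argument in the paper to compare your attempt against.

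On the attempt itself: the evenness half is essentially correct. One phrasing to tighten --- ``top Stiefel--Whitney classes are stable'' is not literally true; what you are actually using is that $TN$ and $F|_N$ are stably isomorphic bundles of the \emph{same} rank $4n-2$, so their total Stiefel--Whitney classes coincide, hence in particular $w_{4n-2}(TN)=w_{4n-2}(F|_N)$, which is the mod-$2$ statement you need. The passage from $\chi(N)$ even to $\chi(M)$ even is then fine.

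The mod-four half, by your own admission, is not a proof but a plan. You have correctly isolated the target congruence $\langle c\cup e(F),[M]\rangle \equiv \langle\mathfrak{P}(v_{2n}(M)),[M]\rangle \pmod 4$ and the ingredients that should feed into it, but the actual verification is not carried out, and it is not just bookkeeping: relating the integral lift coming from the splitting to the Pontryagin-square expression for $\sigma$ mod $4$ is precisely where Atiyah's work lies, and his argument proceeds through index-theoretic integrality rather than a direct Wu-class computation. Your four-dimensional shortcut via the Todd genus is correct and self-contained, but of course does not generalize.
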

So one immediately sees that, for example, $\C P^{2n}$ does not admit type 1 \gcss\ for any $n>0$.

Notice however that once a co-dimension 2 distribution has been found on a manifold, one can always deform it into a  $C^0$ distribution which is integrable with smooth leaves \cite{MR0370619}. Hence the difference between the existence of a codimension two distribution and a $C^{\infty}$ integrable one  is rather subtle.

We now describe several topological properties of  a type 1 generalized Calabi-Yau, and indicate how these are affected by the vanishing of the twisting class introduced above.  	   

\begin{definition}
A \gcm\  is {\it proper} if its symplectic leaves are proper submanifolds.
\end{definition}

\begin{theorem}\label{theo:topology1} Let $(M^{2n},H)$ be a compact, connected type one \gcy\ manifold. Then all of the following hold:
\begin{enumerate}
\item There is a surjective submersion $\pi: M \into T^2$.
\item If $M$ has a compact leaf, then $M$ is proper and $\pi$ can be chosen so that the components of the fibers of $\pi$  are the symplectic leaves of the underlying Poisson structure.
\item If the twisting class vanishes, then the structure can be deformed into a proper one, $M$ admits a symplectic structure for which $\pi:M \into T^2$ is a symplectic fibration, and there are classes $a,b \in H^1(M)$ and $c \in H^2(M)$ such that $abc^{n-1} \neq 0$. In particular $b_i(M)\geq 2 $ for $0<i<2n$.
\end{enumerate}
\end{theorem}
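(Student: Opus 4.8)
The plan is to distil from a type one \gcys\ a closed, nowhere-vanishing complex $1$-form, run a Tischler-type argument on it to get (1), and then strengthen the conclusion under the extra hypotheses for (2) and (3). First, the defining data: such a structure on $(M^{2n},H)$ is globally a $d^H$-closed spinor $\rho=e^{B+i\omega}\wedge\Omega$ with $\Omega$ a nowhere-zero complex $1$-form (the type being $1$) and $\Omega\wedge\ol\Omega\wedge\omega^{n-1}\ne0$ pointwise. Since the $H$-term of $d^H\rho$ lives in degrees $\ge4$, the degree-$2$ part of $d^H\rho=0$ reads $d\Omega=0$; writing $\Omega=\theta_1+i\theta_2$ gives two closed real $1$-forms which, because $\Omega\wedge\ol\Omega=-2i\,\theta_1\wedge\theta_2\ne0$, are everywhere linearly independent. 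They span the conormal bundle of the symplectic foliation $\mc{D}=\ker\theta_1\cap\ker\theta_2$, so $(\theta_1,\theta_2)$ is a closed, pointwise-surjective $\RR^2$-valued $1$-form, i.e.\ $\mc{D}$ is a Lie $\RR^2$-foliation. Part (1) is then Tischler's argument with values in $\RR^2$: approximate $[\theta_1],[\theta_2]\in H^1(M;\RR)$ by rational classes with $C^0$-close closed representatives $\theta_1',\theta_2'$, note $\theta_1'\wedge\theta_2'$ is still nowhere zero by compactness, rescale so both have integral periods, and let $\pi\colon M\to\RR^2/\ZZ^2=T^2$ be integration along paths; its differential is $(\theta_1',\theta_2')$, so $\pi$ is a submersion, and onto since $M$ is compact.

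For (2) I would invoke the structure theory of Lie $\RR^2$-foliations (Fedida): there is a covering $\ol M\to M$ with an equivariant fibration $\mathrm{dev}\colon\ol M\to\RR^2$ whose fibres are the leaves of the lifted foliation, and a holonomy homomorphism $h\colon\pi_1(M)\to\RR^2$ with $M=\ol M/\Gamma$, $\Gamma:=h(\pi_1 M)$ acting compatibly with translations; each leaf of $\mc{D}$ is the injective image of a single $\mathrm{dev}$-fibre, and the leaf closures are the fibres of $M\to\ol\Gamma\backslash\RR^2$. Hence a compact leaf is a compact fibre of $\mathrm{dev}$; as $\RR^2$ is connected, all $\mathrm{dev}$-fibres are then compact and $\mathrm{dev}$ is a locally trivial fibration. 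This forces $\Gamma$ to act properly discontinuously on $\ol M$ (otherwise $M=\ol M/\Gamma$ would fail to be Hausdorff), so $\Gamma$ is discrete; and since $M$ is compact, $\Gamma\backslash\RR^2$ is the compact base of a fibration out of $M$, so $\Gamma$ is a rank-two lattice and $\Gamma\backslash\RR^2=T^2$. Thus $\pi\colon M\to T^2=\RR^2/\Gamma$ is a fibration whose fibre components are precisely the leaves of $\mc{D}$, which are therefore embedded, hence proper, submanifolds.

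For (3): vanishing of the twisting class means $H+d(B+i\omega)=d\beta$ for some $\beta\in\mc{I}^2$ (as in the proof of c) $\Rightarrow$ a) above), so after replacing $B+i\omega$ by $B+i\omega-\beta$ there is a \emph{global} $2$-form $B'+i\omega'$ with $H+d(B'+i\omega')=0$; taking imaginary parts, $\omega'$ is closed, while $d^H e^{B'+i\omega'}=0$ and (as $\beta$ vanishes on $\mc{D}$) $\omega'|_{\mc{D}}=\omega|_{\mc{D}}$ is still leafwise symplectic. Now deform $\Omega$ along the segment to a nearby $\Omega'=\theta_1'+i\theta_2'$ with rational periods as in (1): for a small enough perturbation $\theta_1^{(t)}\wedge\theta_2^{(t)}\wedge(\omega')^{n-1}\ne0$ throughout, so each $e^{B'+i\omega'}\wedge\Omega_t$ is a type one spinor, and it is $d^H$-closed because $d^H e^{B'+i\omega'}=0$ and $d\Omega_t=0$; this deforms the structure into a proper one, since the endpoint's period group is a full-rank lattice --- no rational combination $\alpha\theta_1'+\beta\theta_2'$ can be exact, as $df$ vanishes at a maximum of $f$, contradicting pointwise independence --- and therefore the endpoint fibres over $T^2$ with closed leaves. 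Setting $\sigma:=\omega'+K\,\theta_1'\wedge\theta_2'$ with $K\gg0$ yields a closed nondegenerate form (equal to $\omega'$ along $\mc{D}$, positive transversally), for which $\pi\colon M\to T^2$ is a symplectic fibration. With $a=[\theta_1']$, $b=[\theta_2']\in H^1(M)$ and $c=[\sigma]\in H^2(M)$, the identities $a^2=b^2=0$ give $abc^{n-1}=ab\,[\omega']^{n-1}$, and $\theta_1'\wedge\theta_2'=\pi^*(\text{area form of }T^2)$, so $abc^{n-1}$ pairs with $[M]$ to the symplectic volume of a fibre times the area of $T^2$, which is nonzero. The estimate $b_i(M)\ge2$ for $0<i<2n$ then follows purely formally: in each degree one exhibits two of the classes $c^j,\ abc^{j-1},\ ac^j,\ bc^j$ and checks, using only $abc^{n-1}\ne0$ and $a^2=b^2=0$, that their Poincaré pairing with a suitable complementary pair is nondegenerate.

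The step I expect to be hardest is (2): extracting a fibration over $T^2$ from the mere existence of a compact leaf, which comes down to discreteness of the holonomy group of the Lie $\RR^2$-foliation; the same issue resurfaces in (3), where one must verify that rationalizing $\Omega$ is a genuine deformation through type one \gcyss. Both rely on the rigidity coming from $\theta_1,\theta_2$ being closed, together with the openness of the relevant nondegeneracy conditions.
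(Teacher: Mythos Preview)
Your argument is correct throughout and, for parts (1) and (3), tracks the paper's proof closely: the paper also extracts $d\Omega=0$ from the degree-$2$ part of $d^H\rho=0$, shows $[\Omega_R],[\Omega_I]$ are independent by the same ``a primitive would give a submersion to $\RR$'' trick, rationalises, and integrates; and for (3) it likewise replaces $B+i\omega$ by a global $d^H$-closed representative, perturbs $\Omega$ to a rational class, and takes $\omega$ plus a transverse $2$-form as the symplectic structure. Your version of (3) is in fact more careful: you put in the large constant $K$ to guarantee nondegeneracy and actually carry out the $abc^{n-1}\ne0$ and $b_i\ge 2$ verifications, which the paper leaves implicit.

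The genuine divergence is in part (2). The paper gives two proofs: one quotes Brunella's theorem on transversally holomorphic codimension-one foliations with a compact leaf of finite holonomy (the holonomy is trivial here because $\Omega=dz$ near a compact leaf), and a second, self-contained one that chooses a complex vector field $X$ with $\Omega(X)=1$, $\ol\Omega(X)=0$, uses the flows of $X_R,X_I$ to carry leaves to leaves, and shows the set of leaves diffeomorphic to the given compact one is open and closed. Your route via Fedida's structure theorem for Lie $\RR^2$-foliations is a legitimate alternative and is morally the same mechanism dressed differently: Fedida's statement that $\mathrm{dev}\colon\ol M\to\RR^2$ is a locally trivial fibration \emph{uses} completeness of exactly those transverse vector fields (guaranteed by compactness of $M$), which is the engine of the paper's direct argument. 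What your packaging buys is that ``all leaves diffeomorphic'' comes for free from the fibration, and the lattice conclusion drops out of proper discontinuity plus compact fibre; what the paper's direct argument buys is that it is entirely elementary and avoids citing either Brunella or Fedida. One phrasing to tighten: you write ``this forces $\Gamma$ to act properly discontinuously on $\ol M$'', but $\Gamma$ already does, being a deck group; the correct logic is that proper discontinuity on $\ol M$ \emph{together with} compactness of the $\mathrm{dev}$-fibres forces $\Gamma$ to be discrete in $\RR^2$ (your parenthetical about Hausdorffness is the right contradiction, just attached to the wrong clause).
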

\begin{proof}
Throughout the proof we let $\rho = e^{B+i\omega}\wedge \Omega$ be a $d^H$-closed trivialization of the canonical bundle of $M$.

{\it 1.}  Let $\Omega_R$ and $\Omega_I$ be the real and imaginary parts of $\Omega$. First we show that $[\Omega_R]$ and $[\Omega_I]$ are linearly independent classes in $H^1(M)$. If there was a nontrivial linear combination, say, $\lambda_R[\Omega_R] + \lambda_I[\Omega_I] =0$,  we could define a map $\pi_1:M\into \R$ by
$$\pi_1(p)= \int_{p_0}^p \lambda_R\Omega_R + \lambda_I\Omega_I,$$
where the integral, taken over any path connecting the reference point $p_0$ to $p$, is well defined  because the integrand is an exact form.  Finally, if, say, $\lambda _R \neq 0$, then nondegeneracy implies that $\omega^{n-1}\wedge d\pi_1 \wedge \Omega_I \neq 0$, showing that $d\pi_1$ is nowhere zero and hence $\pi_1$ is a submersion of a compact manifold in $\R$, which is a contradiction.

To construct $\pi$ observe that since nondegeneracy is an open condition, there is a closed form $\Omega'\in \Omega^1(M;\C)$ near $\Omega$, whose real and imaginary parts represent linearly independent rational cohomology classes  and such that
\begin{equation}\label{eq:nondegeneracy condition 2}
\omega^{n-1}\wedge\Omega'\wedge \overline{\Omega'} \neq 0.
\end{equation}
Linear independence of the real and imaginary parts of $\Omega'$ implies that the following map is  a submersion
$$\pi:M\into \C/\Lambda; \qquad \pi(p) =\int_{p_0}^p \Omega'.$$
where $\Lambda = [\Omega'](H_1(M;\ZZ))$ is a co-compact lattice,
$p_0$ is a fixed reference point and the integral is independent of the path connecting $p_0$ to $p$.

{\it 2.} One way to prove this statement is to use the following result:

\begin{theorem}[Brunella \cite{MR1448725}]
Let $\mc{F}$ be a transversally holomorphic foliation of complex co-dimension one on a compact connected manifold $M$. Assume that there exists a compact leaf $L \in \mc{F}$ with finite holonomy. Then $\mc{F}$ is a Seifert fibration.
\end{theorem}

In the present situation, in a tubular neighbourhood $U$ of a compact leaf, $L$, the 1-form $\Omega$ is exact, since it is closed and its restriction to $L$ vanishes. Say $\Omega = dz$ for some complex function $z$ defined on $U$. It follows that $z$ is constant along the leaves and since $\Omega\wedge\bar\Omega \neq 0$, $z:U\to \C$ is a submersion, that is $z$ parametrizes the local leaf space and $L$ has trivial holonomy. Due to Brunella's theorem, $M$ is a Seifert fibration, in particular every leaf is compact. The argument we just used implies that every leaf has zero holonomy and hence the foliation is in fact a regular fibration and the form $\Omega$ is basic thus passes to the (smooth) leaf space, giving it the structure of an elliptic curve.

The proof of Brunella's result is rather analytic. Next we present an alternative more geometric proof  of our result.

Let $X$ be a complex vector field for which  $\Omega(X)=1$ and $\ol{\Omega}(X)=0$. Then the real and imaginary parts of $X$, $X_R$ and $X_I$, are pointwise linearly independent, preserve $\Omega$ and $\ol\Omega$ and hence preserve the foliation determined by $\Omega\wedge \ol\Omega$.

Let $F$ be a compact leaf and define a map $\gf_F:F \times \R^2 \into M$ by
$$\gf_F(p,\lambda_1,\lambda_2) = e^{\lambda_1 X_R + \lambda_2 X_I}(p).$$
Since $(\gf_F)_*(TF\oplus \R^2) = TM$, we conclude that $\gf_F$ is a local diffeomorphism and  since the flow of the vector field $\lambda_1 X_R + \lambda_2 X_I$ preserves the foliation we conclude that all leaves in a neighbourhood of $F$ are diffeomorphic to $F$, that is, 
\begin{enumerate}
\item[a)] if a leaf $F$ is compact, the map $\gf_F$ above gives a local diffeomorphism between a neighbourhood of $F$ and $F \times \mathbb{D}^2$ for which the projection onto the open disc $\mathbb{D}^{2}\subset\RR^{2}$ is the quotient map of the foliation, and 
\item[b)] the set of points which lie in a leaf diffeomorphic to $F$ is an open set.
\end{enumerate}

Let $U \subset M$ be the open set of points which lie in a leaf diffeomorphic to $F$, let $p \in \ol{U}$ and let $\alpha:\mathbb{D}^{2n-2} \into M$ be a parametrization of the leaf through $p$, with $\mathbb{D}^{2n-2}\subset \RR^{2n-2}$ an open ball. Then
\begin{equation}
\gf: \mathbb{D}^{2n-2} \times \mathbb{D}^2 \into M, \qquad \gf(x,\lambda_1,\lambda_2) = e^{\lambda_1 X_R + \lambda_2 X_I}(\alpha(x)),
\end{equation}
is a local diffeomorphism and hence its image contains a point $q \in U$, say $q = e^{\lambda_1 X_R + \lambda_2 X_I}(\alpha(x))$. Let $F$ be the compact leaf through $q$. Then $\alpha(\mathbb{D}^{2n-2})\cap \Im(\gf_F) \neq \emptyset$, as, inverting the exponential, we get $\gf_F(q,-\lambda_1,-\lambda_2) \in \Im(\alpha)$ and hence $\gf_F$ gives a diffeomorphism between $F$ and the leaf through $p$. That is, the set $U$ above is also closed, and since $M$ is connected, $U = M$ and by property {a)} we conclude that $M$ is a fibration $M \into \Sigma$ over a compact surface.
To determine $\Sigma$, we observe that $\Omega$ is basic for this fibration  since it is closed and annihilates vertical vectors. Thererefore $\Sigma$ has a nowhere vanishing closed 1-form, giving $\Sigma = T^2$.

{\it 3.} If the twisting class vanishes, we can choose the forms $B$ and $\omega$ so that $d^H  e^{B + i\omega} =0$. Changing $\Omega$ to nearby form representing a rational class transforms the structure into a proper one. Then $\omega + \frac{1}{2i}\Omega \wedge \ol{\Omega}$ defines a symplectic form on $M$, and due to \eqref{eq:nondegeneracy condition 2}, it is  symplectic on the leaves of the distribution generated by $\Omega_R'$ and $\Omega_I'$, rendering $\pi$ a symplectic fibration.
\end{proof}

\begin{corollary}
If $M$ is a compact type 1 \gcy\ then $b_1(M) \geq 2$ and $\chi_M=0$.
\end{corollary}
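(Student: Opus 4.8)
The plan is to extract both claims directly from the construction carried out in the proof of Theorem~\ref{theo:topology1}, with no appeal to the vanishing of the twisting class, so that the conclusion holds for every compact, connected type one \gcy.

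For $b_1(M)\geq 2$, I would simply reuse the first step of that proof, which shows that the real and imaginary parts $\Omega_R,\Omega_I$ of the closed $1$-form $\Omega$ (and, after a small rational perturbation, of $\Omega'$) represent linearly independent classes in $H^1(M;\R)$. Hence $b_1(M)=\dim_\R H^1(M;\R)\geq 2$. Equivalently one may observe that $[\Omega_R'],[\Omega_I']$ are the $\pi^*$-images of a basis of $H^1(T^2;\R)$ under the submersion $\pi\colon M\into T^2$ from Theorem~\ref{theo:topology1}, so that $\pi^*\colon H^1(T^2;\R)\hookrightarrow H^1(M;\R)$ is injective; either phrasing needs nothing beyond what the proof already provides.

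For $\chi_M=0$, I would exhibit a nowhere-vanishing vector field on $M$ and invoke Poincar\'e--Hopf. The nondegeneracy condition \eqref{eq:nondegeneracy condition 2} (equivalently, condition~(2) in the definition of a \gcy\ applied to the global trivialization $e^{B+i\omega}\wedge\Omega$) reads, up to a nonzero constant, $\omega^{n-1}\wedge\Omega_R\wedge\Omega_I\neq 0$, which forces the closed $1$-form $\Omega_R$ to be nowhere zero on $M$; dualizing with an auxiliary Riemannian metric then yields a nowhere-vanishing vector field and hence $\chi_M=0$. An alternative route: since $M$ is compact and $\pi\colon M\into T^2$ is a submersion, Ehresmann's theorem makes $\pi$ a smooth fibre bundle with some fibre $F$, and multiplicativity of the Euler characteristic gives $\chi_M=\chi(T^2)\cdot\chi(F)=0$.

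I do not anticipate any genuine obstacle here: both assertions are formal consequences of the $1$-forms constructed while proving Theorem~\ref{theo:topology1}. The only mild point to watch is that the Euler-characteristic argument must be sourced from part~1 together with the nondegeneracy \eqref{eq:nondegeneracy condition 2} (both of which hold unconditionally) rather than from part~3, so that the twisting class plays no role in the corollary.
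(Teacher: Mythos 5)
Your proposal is correct, and it both contains the paper's argument and adds a more elementary one. The paper's own proof is the two-line observation that $\pi^*:H^1(T^2)\to H^1(M)$ is injective for the fibration $\pi:M\to T^2$ produced in Theorem \ref{theo:topology1}, and that the Euler characteristic of a fibration is the product of those of base and fiber, so $\chi_M=\chi(T^2)\cdot\chi(F)=0$. Your primary route for $b_1(M)\geq 2$ --- the linear independence of $[\Omega_R]$ and $[\Omega_I]$ established in step 1 of that proof --- is really the same fact, just read off before passing to the fibration (indeed the injectivity of $\pi^*$ is exactly the statement that the pullbacks of the generators of $H^1(T^2)$, namely $[\Omega_R']$ and $[\Omega_I']$, are independent). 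Your primary route for $\chi_M=0$ is genuinely different and more elementary: the pointwise nondegeneracy $\omega^{n-1}\wedge\Omega\wedge\overline{\Omega}\neq 0$ forces the closed $1$-form $\Omega_R$ to be nowhere zero, and Poincar\'e--Hopf applied to its metric dual kills the Euler characteristic. This has the small advantage of not needing the full strength of part 1 of Theorem \ref{theo:topology1} (no perturbation to rational classes, no Ehresmann), only the global nowhere-vanishing trivialization $e^{B+i\omega}\wedge\Omega$ coming from the Calabi--Yau hypothesis; the paper's multiplicativity argument, by contrast, is shorter once the fibration is in hand. You are also right that neither claim touches part 3 or the twisting class.
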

\begin{proof}
For any fibration $\pi:M\to T^2$, $\pi^*:H^1(T^2) \to H^1(M)$ is injective, hence $b_1(M) \geq 2$. Also, the Euler characteristic of a fibration is the product of the Euler characteristics of the base and of the fiber.
\end{proof}

These simple obstructions tell us for example that there are no type one  generalized Calabi-Yau structures on products of spheres of dimension bigger than 1. Notice however that the manifolds $S^1\times S^3$ and $S^1 \times S^5$ do admit a type one \gcs\ with topologically trivial canonical bundle \cite{Cavalcanti:2015uua} but, by the results above, these are not generalized Calabi-Yau.

\section{Generalized Calabi--Yaus from symplectic fibrations}

Theorem \ref{theo:topology1} shows that there is a relation between proper type one \gcy s and fibrations over the 2-torus with symplectic fibers. Next we prove a partial converse to this result.

\begin{theorem}\label{theo:fibration}
Let $\pi:M \to T^2$ be a symplectic fibration over the torus. Then $M$ admits a proper type 1 \gcys, integrable \wrt\ the zero 3-form,  for which the fibers of $\pi$ are the symplectic leaves. 
\end{theorem}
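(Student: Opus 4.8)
The plan is to produce the required $d^H$-closed spinor explicitly, taking $H=0$ and trivial $B$-field, from the pullback of a holomorphic coordinate on $T^2$ together with a fiberwise symplectic two-form coming from a symplectic connection on $\pi$.

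First, fix a flat structure $T^2=\C/\Lambda$ with holomorphic coordinate $z$ and set $\Omega:=\pi^*(dz)$. This is a nowhere-vanishing closed (hence decomposable) complex $1$-form, and since $\pi$ is a submersion $\Omega\wedge\overline\Omega=-2i\,\pi^*(dx\wedge dy)$ is nowhere zero with annihilator exactly the vertical distribution $\mathcal{V}=\ker d\pi$. Because $\pi$ is a symplectic fibration it admits a symplectic connection: a splitting $TM=\mathcal{V}\oplus\mathcal{H}$ whose parallel transports are symplectomorphisms between fibers. Let $\omega$ be the associated fiberwise symplectic $2$-form, i.e. $\omega(u,v)=\omega_b(u^{\mathcal{V}},v^{\mathcal{V}})$ where $\omega_b$ is the symplectic form of the fiber $F_b=\pi^{-1}(b)$ and $u^{\mathcal{V}}$ denotes the $\mathcal{V}$-component; thus $\iota_X\omega=0$ for $X\in\mathcal{H}$ and $\omega|_{F_b}=\omega_b$. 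I claim that $\rho:=e^{i\omega}\wedge\Omega$ trivializes the canonical bundle of a type-one generalized Calabi--Yau structure with the stated properties.

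The key point is $d\rho=0$. Grading forms by the splitting $\mathcal{V}\oplus\mathcal{H}$, the $(0,3)$-part of $d\omega$ is $d(\omega|_{F_b})=0$; for a horizontal lift $X^h$ one has $\mathcal{L}_{X^h}\omega=\iota_{X^h}d\omega$ since $\iota_{X^h}\omega=0$, and its restriction to $\mathcal{V}\times\mathcal{V}$ --- which is precisely the $(1,2)$-part of $d\omega$ --- vanishes because parallel transport preserves the $\omega_b$. Hence $d\omega$ is purely of type $(2,1)$, and as $\dim T^2=2$ every horizontal $2$-form is a multiple of $\pi^*(\mathrm{vol}_{T^2})$, so $d\omega=\pi^*(\mathrm{vol}_{T^2})\wedge\nu$ for some $\nu\in\Omega^1(M)$. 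Since $\mathrm{vol}_{T^2}\wedge dz=0$ and $d\Omega=0$ we conclude $d\rho=i\,e^{i\omega}\wedge d\omega\wedge\Omega=i\,e^{i\omega}\wedge\nu\wedge\pi^*(\mathrm{vol}_{T^2}\wedge dz)=0$. (One may avoid the language of symplectic connections: starting from any fiberwise symplectic $\omega_0$, the two-forms $\beta_i$ in $d\omega_0=\pi^*dx\wedge\beta_1+\pi^*dy\wedge\beta_2$ restrict to exact forms on every fiber --- the fiber symplectic class is a flat section of the Gauss--Manin system, the structure group being $\mathrm{Symp}(F,\omega_F)$ --- so after correcting $\omega_0$ by semibasic one-forms one again makes $d\omega$ divisible by $\pi^*(\mathrm{vol}_{T^2})$.)

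It remains to verify the pointwise type and to identify the foliation. At each point $\rho=e^{0+i\omega}\wedge\Omega$ with $\Omega$ decomposable of degree $k=1$, and $\Omega\wedge\overline\Omega\wedge\omega^{n-1}=-2i\,\pi^*(dx\wedge dy)\wedge\omega^{n-1}$ is a volume form: evaluating on two horizontal lifts of $\partial_x,\partial_y$ together with a vertical frame, $\omega^{n-1}$ supplies the fiber volume (as $\omega$ is fiberwise nondegenerate) and $\pi^*(dx\wedge dy)$ the base area. So $\rho$ satisfies the algebraic conditions for a type-one generalized complex structure, and being globally defined, nowhere zero and $d^H$-closed with $H=0$ it determines a generalized Calabi--Yau structure integrable with respect to the zero $3$-form. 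Its symplectic distribution $\mathcal{D}$ is the annihilator of $\Omega\wedge\overline\Omega$, which is $\mathcal{V}$, so the symplectic leaves are the connected components of the fibers of $\pi$, carrying the symplectic forms $\omega|_{F_b}$; and since fibers of a fiber bundle are closed embedded submanifolds, the structure is proper. The one step I would treat with care is the existence of the symplectic connection and the vanishing of the $(0,3)$ and $(1,2)$ parts of $d\omega$: both are standard for symplectic fibrations, provided one is careful that ``fiberwise symplectic'' means an honest $2$-form on $M$ restricting to $\omega_b$ on $F_b$ and that the connection's holonomy genuinely lies in $\mathrm{Symp}(F,\omega_F)$.
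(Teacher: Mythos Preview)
Your argument is correct. The approach differs from the paper's in a clean and instructive way. The paper starts from an arbitrary fiberwise symplectic $2$-form $\omega$, takes $\mathcal{H}$ to be its $\omega$-orthogonal complement of $\mathcal{V}$, and then, since the $(2;0,1)$-component $\bar\partial_{\mathcal{H}}\omega$ need not vanish, constructs a complex correction $A^{0,1}\in\Omega^{1;0,1}(M)$ by a partition-of-unity argument so that $\bar\partial_{\mathcal{H}}\omega + d_{\mathcal{V}}A^{0,1}=0$; the resulting spinor is $e^{i(A^{0,1}+\omega)}\wedge\pi^*\Omega$, with a genuinely nonzero $B$-field in general. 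You instead invoke the existence of a \emph{symplectic} connection on a symplectic fibration (a standard fact, proved by exactly the same partition-of-unity averaging that the paper performs explicitly), which forces the $(1,2)$-part of $d\omega$ to vanish from the outset; hence $d\omega$ is purely $(2,1)$ and is killed by $\wedge\,\pi^*dz$, giving the simpler spinor $e^{i\omega}\wedge\Omega$ with $B=0$. The trade-off is that your proof is shorter and yields a spinor with trivial $B$-field, while the paper's is fully self-contained and avoids citing the existence of symplectic connections. Your parenthetical alternative (correcting $\omega_0$ by semibasic forms using local constancy of $[\omega_b]$ under Gauss--Manin) is essentially the paper's construction rephrased, though to make it rigorous you would still need a partition of unity to produce globally smooth primitives, just as the paper does.
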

\begin{proof}
We will prove the theorem by constructing explicitly a closed form determining a \gcys\ with the desired properties. Since $M\to T^2$ is a symplectic fibration, say, with generic fiber $(F,\omega_F)$, there is a 2-form $\omega \in \Omega^2(M)$ such that for all $x \in T^2$, $\iota_x^*\omega = \omega_F$ where $\iota_x:(F,\omega_F) \to M$ is the inclusion of the fiber over the point $x$. Letting $\mc{V} = \operatorname{ker}\pi_*$, the form $\omega$ gives rise to a distribution transverse to the fibers:
$${\mc{H}} = \{X\in TM |\omega(X,Y) = 0 \mbox{ for all } Y \in \mc{V}\}.$$ 
By pre-composing $\omega$ with projection to $\mc{V}$ along ${\mc{H}}$, we obtain a 2-form that equals $\omega_F$ in every fiber and annihilates the transversal distribution ${\mc{H}}$. We will  still denote this new form by $\omega$.

We pick a complex structure on the 2-torus together with a specific trivialization of the canonical bundle: $\Omega \in \Omega^{1,0}(T^2)$. In the presence of the distributions $\mc{V}$ and ${\mc{H}}$, it is convenient to split the space of forms and the exterior derivative \wrt\ the splitting $TM= \mc{V} \oplus {\mc{H}}$. Indeed, we can identify $\mc{V}^* = \mathrm{Ann}({\mc{H}})$ and ${\mc{H}}^* = \mathrm{Ann}(\mc{V})$. Further, using the complex structure on the torus, we can split $\mc{H}_\C = {\mc{H}}^{1,0}\oplus {\mc{H}}^{0,1}$ and this gives a splitting of forms:
$$\wedge^lT_\C^*M  =\oplus_{k+p+q = l}\wedge^{k;p,q}T^*_\C M, \qquad  \wedge^{k;p,q}T^*_\C M = \wedge^k\mc{V}^*_\C \tensor \wedge^p{\mc{H}}^{*1,0}\tensor  \wedge^q{\mc{H}}^{*0,1}.$$
Of course, the only values of $p$ and $q$ for which these spaces are nontrivial are $0$ and $1$. We denote the space of sections of $\wedge^{k;p,q}T^*_\C M$ by $\Omega^{k;p,q}(M)$.

Since $\mc{V}$ is an integrable distribution, $d$ splits into four components according to this decomposition of forms:
$$d:\Omega^{k;p,q} (M) \to \Omega^{k+1;p,q}(M)\oplus \Omega^{k;p+1,q}(M)\oplus \Omega^{k;p,q+1}(M)\oplus  \Omega^{k-1;\bullet,\bullet}(M)$$
and we denote the different components by
\begin{align*} 
d_\mc{V}:\Omega^{k;p,q} (M) &\to \Omega^{k+1;p,q}(M),\\
\del_{\mc{H}}:\Omega^{k;p,q} (M) &\to \Omega^{k;p+1,q}(M),\\
\delbar_{\mc{H}}:\Omega^{k;p,q} (M) &\to \Omega^{k;p,q+1}(M),\\
N:\Omega^{k;p,q} (M) &\to \Omega^{k-1,\bullet,\bullet}(M).
\end{align*}

Let $\mc{U} = \{U_\alpha:\alpha \in A\}$ be an open cover of $T^2$ over which the fiber bundle can be trivialized: $\pi^{-1}(U_\alpha)= U_\alpha \times F$. Since $\iota_x^*\omega = \omega_F$ for every $x \in T^2$ the difference $\omega -\omega_F$ vanishes along the fibers, that is,
$$\omega -\omega_F \in \Omega^{1;1,0} (M) \oplus \Omega^{1;0,1} (M) \oplus\Omega^{0;1,1} (M).$$
Therefore there are forms  $A^{0,1}_\alpha \in \Omega^{1;0,1} (M)$ and $C_\alpha \in \Omega^{0;1,1} (M)$ such that
$$\omega +A^{0,1}_\alpha+ \overline{A^{0,1}_\alpha} + C_\alpha = \omega_F$$
Since $d\omega_F =0$,  the $(2,0,1)$ component of the exterior derivative of the form above is
$$\delbar_{\mc{H}}\omega + d_{\mc{V}}A^{0,1}_\alpha =0.$$

Let $\{\psi_\alpha:\alpha \in A\}$ be a partition of unity subodinate to the cover $\mc{U}$ and consider the globally defined form $A^{0,1} = \sum (\pi^*\psi_\alpha) A^{0,1}_\alpha$. Since  $\pi^*\psi_\alpha$ is fiberwise constant, $d_{\mc{V}}\pi^*\psi_\alpha = 0$ and hence we have
\begin{equation}\label{eq:exact}
\delbar_{\mc{H}}\omega + d_{\mc{V}}A^{0,1} =\delbar_{\mc{H}}\omega + \sum_\alpha \pi^*\psi_\alpha d_{\mc{V}} A^{0,1}_\alpha= \sum_\alpha \pi^*\psi_\alpha(\delbar_{\mc{H}}\omega +  d_{\mc{V}} A^{0,1}_\alpha) =0.
\end{equation}

We claim that the form 
$$\rho = e^{i(A^{0,1}+ \omega)}\wedge \pi^*\Omega$$
endows $M$ with the desired \gcys. Indeed, by construction $A^{0,1}$ restricts to the zero form in every fiber so
$$\iota_x^*\operatorname{Re}(A^{0,1}+ \omega) = \iota_x^*\omega = \omega_F,$$
showing that $A^{0,1} + \omega$ induces the given symplectic structure on the fibers. Similarly, since both $\Omega\wedge\bar{\Omega}\wedge A^{0,1}$ and $\Omega\wedge\bar{\Omega}\wedge \bar{A^{0,1}}$ vanish we have
$$\Omega\wedge\bar{\Omega}\wedge \operatorname{Im}(A^{0,1}+ i\omega)^{n-1} =\Omega\wedge\bar{\Omega}\wedge\omega^{n-1} \neq 0,$$
as $\omega$ is symplectic on the leaves of $\pi:M\to T^2$. That is, $\rho$ is a form of the correct algebraic type and we only have to check the \gcy\ condition.

Since $d\Omega =0$, the condition $d\rho=0$ is equivalent to $d(A^{0,1}+ \omega) \wedge \Omega=0$ which we check by checking the vanishing of each of its components:
\begin{align*}
(3;1,0)& \leadsto d_{\mc{V}}\omega \wedge \Omega =0 \qquad \mbox{ as }\omega \text{ is fiberwise symplectic},\\  
(2;1,1)& \leadsto (d_{\mc{V}} A^{0,1} + \delbar_\mc{H}\omega) \wedge \Omega =0 \qquad \text{ follows from \eqref{eq:exact}}. 
\end{align*}
There are no further components, since the torus has complex dimension 1 and hence the $(k;p,q)$ component of a form vanishes if either $p$ or $q$ is greater than $1$.
\end{proof}

\begin{remark}
Given a symplectic fibration over a complex manifold, the question of whether of not the total space admits a \gcs\ was studied by Bailey in \cite{MR3150703}: the total space has a \gcs\ for which the fibers are the symplectic leaves \iff\ two obstructions vanish. One can apply that theory to prove Theorem \ref{theo:fibration} and it was with the insights from \cite{MR3150703} that we obtained the present proof. Here we opted instead for a simpler, more direct proof.   
\end{remark}

In four dimensions this gives a full converse to Theorem \ref{theo:topology1}

\begin{corollary}
A compact connected four-manifold $M$ admits a type one \gcys\ integrable \wrt\ the zero 3-form \iff\ it is orientable and it fibers over $T^2$.
\end{corollary}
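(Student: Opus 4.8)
The plan is to obtain the corollary by combining Theorem~\ref{theo:topology1} and Theorem~\ref{theo:fibration}, the only additional input being an orientation argument on each side.

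For the ``only if'' direction: suppose $M^{4}$ carries a type one generalized Calabi--Yau structure integrable with respect to $H=0$. The first part of Theorem~\ref{theo:topology1} produces a surjective submersion $\pi\colon M\to T^{2}$; as $M$ is compact this submersion is proper, hence a locally trivial fibration by Ehresmann's theorem, so $M$ fibers over $T^{2}$. That $M$ is orientable is the linear-algebraic remark recorded before Atiyah's theorem: a type one (almost) generalized complex structure endows $M$ with an oriented codimension-two distribution --- the leafwise directions, oriented by the leafwise symplectic form --- together with an oriented rank-two complement, and summing these two orientations orients $TM$.

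For the ``if'' direction: suppose $M^{4}$ is orientable and $\pi\colon M\to T^{2}$ is a fibration. Since $M$ is compact the fibers are closed surfaces. Choosing a Riemannian metric splits $TM\cong \mc{V}\oplus\pi^{*}TT^{2}$ with $\mc{V}=\operatorname{ker}\pi_{*}$, so the Whitney sum formula gives $w_{1}(\mc{V})=w_{1}(TM)+\pi^{*}w_{1}(TT^{2})=0$; hence $\mc{V}$ is orientable and the fibers are closed oriented surfaces. Equipping them with a fiberwise area form (any oriented surface bundle is a symplectic fibration) exhibits $\pi$ as a symplectic fibration over $T^{2}$, and Theorem~\ref{theo:fibration} then endows $M$ with a proper generalized Calabi--Yau structure, integrable with respect to the zero $3$-form, whose symplectic leaves are the two-dimensional fibers; since a generalized complex structure whose leaves have real dimension $2n-2$ is of type one, this structure is of type one. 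Together with the previous paragraph this proves the corollary.

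I expect no real obstacle: the content is entirely carried by the two quoted theorems, and the only thing requiring attention is the orientation bookkeeping on both sides --- in particular the observation that an oriented $4$-manifold fibering over $T^{2}$ automatically has oriented surface fibers, so that Theorem~\ref{theo:fibration} genuinely applies.
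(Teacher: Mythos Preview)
Your argument is correct and matches the paper's proof in structure: both directions are obtained from Theorems~\ref{theo:topology1} and~\ref{theo:fibration}, with only the orientation bookkeeping to be filled in. You are simply more explicit than the paper in a couple of spots --- invoking Ehresmann to pass from the submersion of Theorem~\ref{theo:topology1}(1) to a fibration, and using the Whitney sum/Stiefel--Whitney argument to conclude that the fibers are orientable --- where the paper just asserts that generalized complex manifolds are orientable and that an orientable surface bundle is a symplectic fibration.
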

\begin{proof}
Every \gcm\ is orientable and according to Theorem \ref{theo:topology1}, every type one compact \gcy\ is a fibration over a 2-torus. Conversely, every orientable surface bundle can be made into a symplectic fibration, hence, due to Theorem \ref{theo:fibration}, every surface bundle over the torus can be made into a type one \gcy\ with vanishing 3-form. 
\end{proof}

\bibliographystyle{hyperamsplain} \bibliography{references}

\end{document}